\numberwithin{equation}{section}
\theoremstyle{plain}
        \newtheorem{proposition}[equation]{Proposition}
        \newtheorem{remark}[equation]{Remark}
	\newtheorem{definition}[equation]{Definition}
\theoremstyle{definition}
\newcommand{\mr}[1]{\buildrel {#1} \over \longrightarrow}
\newcommand{\ml}[1]{\buildrel {#1} \over \longleftarrow}
\newcommand{\rimply}{\Rightarrow}
\newcommand{\cc}{\mathcal}
\begin{document}

\title{2-filteredness and the point of every Galois topos}

\author{Eduardo J. Dubuc}

\begin{abstract}
A locally connected topos is a \emph{Galois topos} if the Galois
objects generate the topos. We show that the full subcategory of
Galois objects in any connected locally connected topos is an inversely 2-filtered 2-category, and as an
application of the construction of 2-filtered bi-limits of topoi, we
show that every Galois topos has a point.
\end{abstract}

\maketitle


\vspace{2ex}

{\sc introduction.} Galois topoi (definition \ref{galoistopos}) arise in Grothendieck's Galois theory
of locally connected topoi. They are an special kind of atomic
topoi. It is well known that atomic topoi may be pointless \cite{Mk}, however,
in this paper we show that any Galois topos has points. 

We show how
the full subcategory of Galois objects (definition
\ref{galoisobjects}) in any connected locally connected topos $\cc{E}$
has an structure of 2-filtered 2-category (in the
sense of \cite{DS}). Then we show that the assignment, to each Galois
object $A$, of the category $\cc{D}_A$ 
of connected locally constant objects trivialized by $A$ (definition \ref{D_A}), determines a
2-functor into the category of categories. Furthermore, this
2-system becomes a pointed 2-system of pointed sites (considering the
topology in which each single arrow is
a cover). By the results on 2-filtered bi-limits of topoi \cite{DY}, it
follows that, if $\cc{E}$ is a Galois topos, then it is the bi-limit of
this system, and thus, it has a point.   

\vspace{1ex}

{\sc context.} 
Throughout this paper $\cc{S} = Sets$ denotes the topos of
sets. All topoi $\cc{E}$ are assumed to be Grothendieck topoi
(over $\cc{S}$), the structure map will be denoted by 
$\gamma: \cc{E} \to \cc{S}$ in all cases. 

\section{Galois topoi and the 2-filtered 2-category of Galois objects}
\label{cpts} 

We recall now the definition
of Galois object in a topos. The original definition of Galois object given in \cite{G1} was
relative to a surjective point of the topos:
 
\begin{definition} \label{chargalois}
Let $\mathcal{E} \mr{\gamma} \cc{S}$ be a topos furnished with a surjective point, that is, a geometric morphism $\cc{S} \mr{p}  
\cc{E}$ whose inverse image functor reflects isomorphisms. Then, an
object $A$ is a Galois object if:

i) There exists $a \in p^{*} A$
such that the map $Aut(A) \mr{a^{*}} p^{*} A$, defined by \mbox{$a^{*}(h) =
p^{*}(h)(a)$} is a bijection (the same holds then for any other $b
\in p^{*}A$). 

ii)  $A$ is connected and $A \to 1$ is epimorphic.
\end{definition}
Notice that in the context of the classical 
Galois theory (Artin's interpretation) this definition coincides with the definition of normal extension.

It is easy to check that in the presence of a (surjective) point the following unpointed definition is equivalent. 
\begin{definition} \label{galoisobjects}
An object $A$ in a topos $\gamma: \cc{E} \to \cc{S}$ is a Galois
object if:

i) The canonical map  $A \times \gamma^{*} Aut(A) \mr{}  A \times A$, 
described by \mbox{$(a,\, h) \mapsto (a, \, h(a))$,} is an isomorphism.

ii) $A$ is connected and $A
\to 1$ is epimorphic.

In particular, $A$ is a connected locally constant object. It follows:

iii) ``$Z \subset A  \hspace{4ex} \rimply  \hspace{4ex} Z =
\emptyset \hspace{2ex} or \hspace{2ex} Z = A$''.   
\end{definition} 
\begin{proposition} \label{abajo}
Let $A$ be a Galois object in a locally connected topos 
\mbox{$\cc{E} \mr{\gamma}
\cc{S}$,} and let $X$ be any object such that there exists an
epimorphism $A \mr{e} X$. Then, the canonical map $A \times
\gamma^{*}[A,\, X] \,\to\,  A \times X$, 
described by $(a,\, f) \mapsto (a, f(a))$, is an isomorphism.
In particular, $X$ is a connected locally constant object split by
the cover $A \to 1$. 
\end{proposition}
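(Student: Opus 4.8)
The plan is to show that the canonical map $\Phi\colon A\times\gamma^{*}[A,X]\to A\times X$, $\Phi(a,f)=(a,f(a))$, is an isomorphism by checking separately that it is a monomorphism and an epimorphism, and then invoking that a topos is a balanced category.

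For the epimorphism part, I would introduce the set map $q\colon Aut(A)\to[A,X]$ given by post-composition with $e$, $q(h)=e\circ h$, and observe that $\Phi\circ(\mathrm{id}_{A}\times\gamma^{*}q)$ coincides with the composite $A\times\gamma^{*}Aut(A)\mr{\sim}A\times A\mr{\mathrm{id}_{A}\times e}A\times X$, whose first arrow is the defining isomorphism of the Galois object $A$ from Definition \ref{galoisobjects}(i). Since $A\times(-)$, being left adjoint to $(-)^{A}$, preserves the epimorphism $e$, this composite is an epimorphism, hence so is $\Phi$. Note that this uses nothing about $q$ beyond its mere existence.

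For the monomorphism part, I would write $A\times\gamma^{*}[A,X]=\coprod_{f\in[A,X]}A$, so that the component of $\Phi$ indexed by $f$ is the split monomorphism $(\mathrm{id}_{A},f)\colon A\to A\times X$. Because coproducts in a topos are disjoint and stable under pullback, the kernel pair of $\Phi$ decomposes as $\coprod_{f,f'}E_{f,f'}$, where $E_{f,f'}\hookrightarrow A$ is the equalizer of $f,f'\colon A\to X$. By clause (iii) of Definition \ref{galoisobjects}, each subobject $E_{f,f'}$ of $A$ is either $\emptyset$ or all of $A$, and it is all of $A$ exactly when $f=f'$; thus the kernel pair reduces to $\coprod_{f}A$ with both projections the identity, i.e.\ to the diagonal, so $\Phi$ is a monomorphism. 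The ``in particular'' clause then follows at once: the isomorphism $\Phi$ exhibits $X$ as split by the cover $A\to 1$ (which is a cover by Definition \ref{galoisobjects}(ii)), so $X$ is locally constant, and applying $\gamma_{!}$ to $e$ gives an epimorphism $1=\gamma_{!}A\to\gamma_{!}X$ in $Sets$, forcing $\gamma_{!}X=1$, that is, $X$ connected.

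The only genuinely delicate point I anticipate is the monomorphism step: one must recognize that clause (iii) of the definition of Galois object is precisely what forces the ``fibres'' of $\Phi$ over distinct morphisms $f\neq f'$ to be disjoint; the rest is a formal manipulation of adjoints and of the exactness properties of a topos. An alternative route would be to first prove that $q\colon Aut(A)\to[A,X]$ is surjective — by transporting the pullback $A\times_{X}A$ of $e$ along an arbitrary $f$ through the Galois isomorphism and again using clause (iii) to identify the transported subobject with a nonempty coproduct of copies of $A$ — and then deducing that $\Phi$ is an isomorphism via a coequalizer computation on $A\times X$; but the mono/epi argument above seems shorter.
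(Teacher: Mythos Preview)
Your proof is correct and follows essentially the paper's approach: both obtain epimorphicity from the commutative square linking the Galois isomorphism $A\times\gamma^{*}Aut(A)\cong A\times A$ with $id_A\times e$, and both derive monomorphicity from clause~(iii) of Definition~\ref{galoisobjects}. The only cosmetic difference is that the paper tests monicity against maps from a connected generator $Z$ (connectedness of $Z$ forces the second coordinate to be a single $f\in[A,X]$, and then $f\circ u=g\circ u$ gives a nonempty equalizer), whereas you compute the kernel pair directly via the coproduct decomposition $A\times\gamma^{*}[A,X]=\coprod_{f}A$; these are equivalent phrasings of the same argument.
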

\begin{proof}
Consider the following commutative diagram:
$$
\xymatrix@C=8ex
         {
           A \times \gamma^{*} Aut(A) \ar[r]^(.6){\cong} 
                                      \ar[d]^{A \times \gamma^* e_*}
         & A \times A \ar[d]^{A \times e}
        \\
           A \times \gamma^{*}[A,\, X] \ar[r] 
         & A \times X
         } 
$$
This shows that the map (bottom row) is an epimorphism. To see that it
is also a 
monomorphism, let
$ \xymatrix
          {
             Z  \ar@<1ex>[r]^(.3){s}
             \ar@<-1ex>[r]^(.3){t}  
          &  A \times \gamma^{*}[A,\, X]
          }
$
(with $Z \neq \emptyset$ and connected) be a pair of maps which become equal into $A \times X$.
The maps $s$ and $t$ are given by pairs $(u,\, f)$ and $(v,\, g)$,
with $\xymatrix{Z  \ar@<1ex>[r]^{u}
                   \ar@<-1ex>[r]^{v} & A}$
and  $\xymatrix{A  \ar@<1ex>[r]^{f}
                   \ar@<-1ex>[r]^{g} & X}$,
such that $u = v$ and $f\circ u = g \circ v$. Then,  $Equalizer(f,\,g)
\neq \emptyset$. It follows from iii) in definition
\ref{galoisobjects} that $f = g$. Since connected objects generate the
topos, this finishes the proof. 
\end{proof}

\vspace{1ex}

Given any two Galois objects $A$, $B$, in a connected locally
connected topos $\cc{E}$, any connected component of the product $A
\times B$ is a connected locally constant object. It follows from the existence of Galois closure (see
for example \cite{D2} A.1.4) that 
there is a Galois object $C$ and morphisms $C \to A$, $C \to B$. The
full subcategory $\cc{A}$ of Galois objects fails to be (inversely)
filtered because, clearly, different morphisms 
$\xymatrix{A  \ar@<1ex>[r]^{u}  \ar@<-1ex>[r]_{v}  & B}$ between
Galois objects  cannot be equalized from a Galois object $C \to A$
unless they are already equal. However, we have:
\begin{proposition} \label{2filtered}
The category $\cc{A}$ of Galois objects in a connected locally
connected topos becomes
a (inversely) 2-filtered 2-category (in the sense of \cite{DS}) by adding a formal 2-cell  
$\xymatrix@C=8ex{A  \ar@<1.5ex>[r]^{u}
  \ar@<-1.5ex>[r]_{v}^{\Downarrow \theta_{vu}}  & B}$ between any two
morphisms, satisfying the following equations: 
$$
\theta_{uu} = id, \;\; \theta_{wv} \circ \theta_{vu} = \theta_{wu}, \;\;
 (thus \; \theta_{uv} = \theta^{-1}_{vu}), \;\; \theta_{vu} \, \theta_{sr} = \theta_{sv \, ru}
$$
\end{proposition}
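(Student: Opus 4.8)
\emph{Proof strategy (plan).}

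The plan is to separate the statement into a purely formal part --- that the prescribed data makes $\cc{A}$ a 2-category --- and a part that uses the hypotheses on $\cc{E}$, namely that this 2-category is inversely 2-filtered in the sense of \cite{DS}. For the first part, the key observation is that one is merely describing the \emph{locally indiscrete} (``chaotic'') 2-category over the ordinary category of Galois objects: objects and 1-cells are unchanged, and for each pair of objects the hom-category is the indiscrete groupoid on the corresponding hom-set, so there is exactly one 2-cell $\theta_{vu}\colon u\Rightarrow v$ between any two parallel 1-cells and it is invertible. The unit and associativity laws for vertical composition are then precisely $\theta_{uu}=id$ and $\theta_{wv}\circ\theta_{vu}=\theta_{wu}$ (whence $\theta_{uv}=\theta_{vu}^{-1}$), so they hold by construction; horizontal composition is forced, because $\theta_{vu}\,\theta_{sr}$ can only be the unique 2-cell of type $ru\Rightarrow sv$, which is the last displayed relation; and, for the same reason, the interchange law, the associativity of horizontal composition and the compatibilities with identities hold automatically, since in each of these axioms both sides are 2-cells with the same source and target.

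The second part is to recall from \cite{DS} that a 2-category is 2-filtered when it is nonempty and: (F1) any two objects admit 1-cells into a common object; (F2) any parallel pair $u_0,u_1\colon a\to b$ of 1-cells admits a 1-cell $v\colon b\to c$ together with a 2-cell $vu_0\Rightarrow vu_1$; (F3) any parallel pair of 2-cells $\varphi,\psi\colon u_0\Rightarrow u_1$ admits a 1-cell $v\colon b\to c$ with $v\varphi=v\psi$; ``inversely'' means these are required with the direction of the 1-cells reversed (the 2-cells unchanged). One then checks these in $\cc{A}$. \textbf{(F0)} $\cc{A}$ is nonempty: the terminal object satisfies conditions i) and ii) of Definition \ref{galoisobjects} ($Aut(1)$ is trivial, $\gamma^{*}Aut(1)=1$, and $1\times 1\to 1\times 1$ is the identity). \textbf{(F1, reversed)} given Galois objects $A_0,A_1$, a connected component of $A_0\times A_1$ is a connected locally constant object, whose Galois closure is a Galois object $C$ equipped with 1-cells $C\to A_0$ and $C\to A_1$; this is the fact recalled just before the statement (see \cite{D2}, A.1.4). \textbf{(F2, reversed)} given parallel $u_0,u_1\colon A\to B$, take $C=A$, $v=id_A$, and the formal 2-cell $\theta_{u_1 u_0}\colon u_0\Rightarrow u_1$ --- this is exactly the role of the added 2-cells, repairing the failure of the ordinary (inverse) filtering condition observed just before the statement. \textbf{(F3, reversed)} there being a unique 2-cell between any parallel pair of 1-cells, $\varphi=\psi$, so $v=id_A$ works.

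I do not expect a genuine obstacle: the only substantive input is the existence of Galois closures, which is quoted, and everything else is bookkeeping. The points that deserve care are (a) confirming that the uniqueness of 2-cells between parallel 1-cells really does force every 2-category axiom --- in particular that no instance of interchange imposes a nontrivial relation among the $\theta$'s --- and (b) keeping track of the variance of the 1-cells in the ``inversely'' version of (F1)--(F3); this last point is in fact harmless, since each hom-category being an indiscrete groupoid, the 2-cells demanded by (F2) and (F3) exist with either orientation. It would also be good to record explicitly that the list of displayed equations is consistent and that the third of them is redundant, following from the first two.
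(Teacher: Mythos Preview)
Your proposal is correct and follows exactly the approach the paper intends: the paper's own proof is merely the sentence ``the proof is very easy, we let the interested reader look at the definition of 2-filtered 2-category given in \cite{DS} and verify the assertion,'' and what you have written is precisely that verification spelled out (locally indiscrete 2-category structure plus the check of the filteredness axioms using the Galois-closure fact quoted just before the statement). There is nothing to correct; if anything, your write-up supplies the details the paper omits.
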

\begin{proof}
the proof is very easy, we let the interested reader look at the
definition of 2-filtered 2-category given in \cite{DS} and verify the assertion.
\end{proof}

After Grothendieck ``Categories Galoisiennes'' of \cite{G1}
and Moerdiejk ``Galois Topos'' of \cite{M2}, we state the following
definition:

\begin{definition} \label{galoistopos}
A Galois Topos is a connected locally connected topos generated by its
Galois objects, or, equivalently, such that any connected object is
covered by a Galois object (notice that we do not require the topos
to be pointed). 
\end{definition}

Since Galois objects are connected locally constant
objects, it follows that Galois topoi are generated by locally
constant objects. On the other hand, the existence of Galois closure (see for
example \cite{D2} 
A.1.4) shows that any such topos is a Galois topos. Thus, \emph{a
  connected  topos is a Galois
topos if and only if it is generated by its connected locally constant
objects.}. It follows:

\begin{proposition} \label{atomic}
Any Galois topos is a connected atomic topos; that is, is a connected locally
connected boolean topos.
\end{proposition}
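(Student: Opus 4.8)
The statement has three parts, but by definition \ref{galoistopos} a Galois topos $\cc E$ is already connected and locally connected, so the plan is to prove that it is boolean; the identification of ``connected atomic'' with ``connected locally connected boolean'' will then be transparent. The key observation I would isolate first is that \emph{every connected object of $\cc E$ is an atom}, i.e. has no subobjects other than $\emptyset$ and itself. Indeed, given a connected $X$, definition \ref{galoistopos} provides an epimorphism $A \mr{e} X$ with $A$ a Galois object (equivalently one may invoke proposition \ref{abajo}, which moreover identifies $X$ as a connected locally constant object split by $A \to 1$). For a subobject $Z \subseteq X$, the pulled-back subobject $e^{*}Z \subseteq A$ is, by item iii) of definition \ref{galoisobjects}, either $\emptyset$ or $A$: in the first case the epimorphism $e^{*}Z \to Z$ (a pullback of $e$, hence epic) forces $Z = \emptyset$, and in the second case $e$ factors through $Z \hookrightarrow X$ and hence $Z = X$.

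Next I would feed this into local connectedness. Any object $Y$ of $\cc E$ decomposes as a coproduct $Y = \coprod_{i} C_{i}$ of connected objects, each of which is an atom by the previous paragraph. Given a subobject $Z \subseteq Y$, universality of coproducts in a topos gives $Z = \coprod_{i} (Z \cap C_{i})$ with each $Z \cap C_{i} \in \{\emptyset,\, C_{i}\}$; writing $S$ for the set of indices with $Z \cap C_{i} = C_{i}$, we find that $Z = \coprod_{i \in S} C_{i}$ is complemented by $\coprod_{i \notin S} C_{i}$. Thus every monomorphism of $\cc E$ is complemented, i.e. $\cc E$ is boolean, and since it is also connected and locally connected it is a connected atomic topos --- indeed the Galois objects, being atoms by item iii) and generating $\cc E$ by definition \ref{galoistopos}, form a generating family of atoms.

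I do not anticipate a genuine difficulty; the argument uses only item iii) of definition \ref{galoisobjects}, the stability of epimorphisms under pullback, and the coproduct decomposition of objects in a locally connected topos. The one point worth stating explicitly is the equivalence between ``connected locally connected boolean'' and ``connected atomic'': the second paragraph supplies the non-obvious direction (in a boolean locally connected topos a connected object cannot be written as a non-trivial coproduct, hence is an atom, and atoms generate), while the converse is the classical fact that an atomic topos is boolean and locally connected.
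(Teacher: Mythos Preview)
Your argument is correct. The paper, however, does not spell out a proof of this proposition: it is stated as an immediate consequence (``It follows:'') of the sentence just before it, namely that a connected locally connected topos is Galois if and only if it is generated by its connected locally constant objects, leaving implicit the (known) fact that such a topos is atomic.

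Your route is more self-contained. Rather than passing through the characterization in terms of locally constant objects, you use item iii) of definition \ref{galoisobjects} directly to show that every connected object covered by a Galois object is an atom, and then obtain booleanness from the coproduct decomposition supplied by local connectedness. This makes the argument independent of any outside facts about locally constant sheaves; the paper's implicit approach, by contrast, highlights the link with the classical theory of locally constant objects (and with the reference \cite{D2}) but does not exhibit the elementary mechanism you isolate.
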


\section{Galois Topoi as filtered bi-limits of topoi with points.}
 
 Consider now a connected locally connected topos $\cc{E}$, and let
 $\cc{C}$ be  
 a full subcategory of connected generators. Let $A \in \cc{C}$ be a
 Galois object. We denote by $\cc{C}_A \subset \cc{A}$ the full
 subcategory whose objects are the $X \in \cc{C}$ below $A$ (remark that this is not the comma category $(A,\, \cc{C})$). If there is a morphism $A \to B$ between Galois
 objects, clearly $\cc{C}_B \subset \cc{C}_A$, and if $\cc{E}$ is a
 Galois topos, by definition the category $\cc{C}$ is the filtered
 union:
$$
\xymatrix
         {
           & .\,.\,. & \cc{C}_B\;\; \ar@{^{(}->}[r] 
           & \;  \cc{C}_A\;\;   \ar@{^{(}->}[r]
           & \;\;.\,.\,. \;\;\;  \ar@{^{(}->}[r] \ & \;\;\; \cc{C}
         }
$$
The topos $\cc{E}$ is the topos of sheaves for the canonical topology
on $\cc{C}$, and each $\cc{C}_A$ is itself a site with this topology. It follows from the theory of filtered inverse bi-limits of topoi
(\cite{G3} Expose VI) that, if $\cc{E}_A$ is the topos of sheaves on
$\cc{C}_A$, then the topos $\cc{E}$ is an inverse bi-limit of topoi: 
$$\cdots  \hspace{5ex} \cc{E}_B  \ml{} \cc{E}_A  \; \ml{}\;\;\;\;\;
\cdots   \;\;\; \ml{}  \; \cc{E}$$

The representable functor $\cc{C}_A \mr{[A,\, -]} \cc{S}$ is a point
of the site, thus the topoi $\cc{E}_A$ are all pointed topoi. However,
a point for the site $\cc{C}$ is equivalent to a simultaneous choice
of points for each $A$ commuting with all the inclusions  
 $\cc{C}_B \subset \cc{C}_A$. That is, an element of the inverse limit
of sets:
$$\cdots  \hspace{5ex} Points(\cc{C}_B)  \ml{} Points(\cc{C}_A)  \; \ml{}\;\;\;\;\;
\cdots   \;\;\; \ml{}  \; Points(\cc{C})$$
which, a priori, may be empty.

\section{Galois Topoi as  pointed 2-filtered bi-limits of pointed topoi.}
 

We shall consider next a different category associated to any Galois
object. 
\begin{definition} \label{D_A}
Let $\cc{E} \mr{\gamma^*} \cc{S}$ be a connected locally connected
topos, let $\cc{C}$ be the subcategory of connected objects, and let
$A \in \cc{C}$ be any Galois object. The category $\cc{D}_A$ is defined
as the bi-pullback of categories:
$$
\xymatrix
         {
            \cc{D}_A  \ar[r]  \ar[d]
          & \cc{C} \ar[d]^{A \times (-)}
          \\
            \cc{S} \ar[r]^{\gamma^*} 
          & \cc{E}_{/A}
         }
$$ 
The objects and arrows of $\cc{D}_A$ can be described as follows
 (where $\pi_1$ denotes the first projection):
$$Ob:\; triples \; (X,\, S,\, \sigma),\;  X \in \cc{C},\; \emptyset
 \neq S \in
 \cc{S}, \;\; (\pi_1,\, \sigma): A \times \gamma^* S \mr{\cong} A \times  X$$
$$Arr: \; (X, \, S,\, \sigma) \to (Y,\, T,\, \xi): \;
X \mr{f} Y,\; S \mr{\eta} T \; | \;\; \xi (a, \, \eta (s)) = f(\sigma(a,
\,s)).
$$
\end{definition}
Given any  $(X,\, S,\, \sigma) \in  \cc{D}_A$, since $S \neq
\emptyset$ it follows that $[A, \, X] \neq \emptyset$. Thus, $X \in
\cc{C}_A$. Furthermore, since $X$
is connected and locally constant, any map $A \to X$ is an
epimorphism. In fact, for the same reason, any map $X \to Y$ is an
epimorphism. It follows that so it is $A \times
\gamma^*(\eta)$. Since $A$ is connected, it follows that $\eta$ is
also an epimorphism, thus a surjective function of sets.    

\begin{remark} \label{epi}
Given any arrow $(X, \, S,\, \sigma) \mr{f, \, \eta} (Y,\, T,\,
\xi)$, $f$ is an epimorphism and $\eta$ a surjective function.
\end{remark}
\begin{proposition} \label{equivalencia}
The functor $\cc{D}_A \mr{\sim} \cc{C}_A$, $(X,\, S,\, \sigma)
\mapsto X$,  is an equivalence of categories. $\cc{D}_A$ has a site
  structure such that any single arrow  is a cover. The induced morphism $\cc{E}_A \mr{\sim} \cc{P}_A$ is an equivalence (where $\cc{P}_A$ denotes the topos of sheaves on $\cc{D}_A$).
\end{proposition}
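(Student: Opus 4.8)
The plan is to establish the three assertions in the order stated, the equivalence of categories being the one that needs an idea and the other two following from it with little extra work. Write $U\colon\cc{D}_A\to\cc{C}_A$ for the projection $(X,S,\sigma)\mapsto X$; as observed just before Remark~\ref{epi} it does factor through $\cc{C}_A$. The guiding observation for the equivalence is that every object of $\cc{D}_A$ is, up to a canonical isomorphism, the \emph{evaluation model} $(X,[A,X],\mathrm{ev}_X)$ of an object $X\in\cc{C}_A$, where $[A,X]$ denotes the (nonempty) set of morphisms $A\to X$ and $\mathrm{ev}_X(a,q)=q(a)$. Granting this, essential surjectivity, faithfulness and fullness of $U$ will all follow, and with them the equivalence $\cc{D}_A\simeq\cc{C}_A$.

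Concretely, I would first note that the evaluation model is a genuine object of $\cc{D}_A$: for $X\in\cc{C}_A$ any morphism $A\to X$ is an epimorphism (as $X$ is connected locally constant), so Proposition~\ref{abajo} applies and says precisely that $(\pi_1,\mathrm{ev}_X)\colon A\times\gamma^{*}[A,X]\to A\times X$ is an isomorphism; this gives essential surjectivity. Faithfulness amounts to the remark that in the defining relation $\xi(a,\eta(s))=f(\sigma(a,s))$ of an arrow of $\cc{D}_A$ the function $\eta$ is determined by $f$: composing with the isomorphisms $(\pi_1,\sigma)$ and $(\pi_1,\xi)^{-1}$ forces the map $A\times\gamma^{*}\eta$, after which one cancels the factor $A$ using that $A\to 1$ is epi, and uses that $\gamma^{*}$ is faithful because $\cc{E}$ is connected. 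For fullness I would show that an arbitrary $(X,S,\sigma)$ is isomorphic in $\cc{D}_A$ to $(X,[A,X],\mathrm{ev}_X)$ via $(\mathrm{id}_X,\theta)$ with $\theta(s)=\sigma(-,s)\colon A\to X$; here $\theta$ is injective because $(\pi_1,\sigma)$ is a monomorphism and $A\to 1$ is epi, and surjective because transporting a morphism $q\colon A\to X$ back along $(\pi_1,\sigma)^{-1}$ yields a section $A\to A\times\gamma^{*}S$ whose second component, being a morphism from the connected object $A$ into the constant object $\gamma^{*}S$, is constant and hence comes from a point of $S$. Since between evaluation models a morphism $f\colon X\to Y$ lifts tautologically to $(f,\,f\circ(-))$, conjugating by these isomorphisms lifts an arbitrary $f$, which gives fullness.

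Next, for the site structure I would declare a sieve to be covering exactly when it is nonempty --- equivalently, when it contains (equivalently, is generated by) a single arrow. The maximality and local-character axioms are immediate, and the only substantive point is stability under pullback, which reduces to an amalgamation (right Ore) property that, by the equivalence just proved, may be checked inside $\cc{C}_A$: for a cospan $V\to U\leftarrow T$ in $\cc{C}_A$ one needs an object $W\in\cc{C}_A$ with compatible morphisms to $V$ and $T$. Here I would invoke Proposition~\ref{abajo} again, which for $T\in\cc{C}_A$ presents $A\times T$ as $\coprod_{[A,T]}A$; consequently any pullback $A\times_U T$ formed in $\cc{E}$ is still a coproduct of copies of $A$ (a subobject of $A$ being $\emptyset$ or $A$ by (iii) of Definition~\ref{galoisobjects}), and it is nonempty because it maps epimorphically onto $A$, so selecting one of these copies of $A$ furnishes the amalgamation with $W=A\in\cc{C}_A$.

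Finally, for the induced equivalence $\cc{E}_A\simeq\cc{P}_A$ I would check that $U$ is an equivalence of sites: it matches single arrows with single arrows, and the canonical topology on $\cc{C}_A$ coincides with the single-arrow topology, since a single epimorphism of $\cc{E}$ whose target lies in a generating subcategory generates an effective epimorphic sieve, while conversely every nonempty sieve of $\cc{C}_A$ contains a morphism and every morphism of $\cc{C}_A$ is an epimorphism of $\cc{E}$; an equivalence of sites induces an equivalence of sheaf topoi. The step I expect to be the real obstacle is fullness of $U$ --- pinning down that the ``discrete'' datum $(S,\sigma)$ attached to an object of $\cc{D}_A$ is rigidly determined, via the connectedness of $A$, by the underlying object $X$ --- together with the amalgamation property above; both rest on the same two structural facts about a Galois object, namely that $A\times X$ splits as a coproduct of copies of $A$ for every connected locally constant $X$, and that $A$ has no nontrivial subobjects. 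Once these are in hand the topology comparison is routine bookkeeping.
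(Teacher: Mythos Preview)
Your argument is correct and follows essentially the same route as the paper: essential surjectivity via Proposition~\ref{abajo}, and full-faithfulness via the fact that a morphism from the connected object $A$ into a constant object $\gamma^*T$ is determined by a point of $T$ (the paper phrases this more tersely as ``by definition of connected object there exists a unique $\eta$'', bypassing your detour through the evaluation model $(X,[A,X],\mathrm{ev}_X)$, but the content is the same). Your explicit verification of the right Ore condition for the atomic topology and of the comparison with the canonical topology on $\cc{C}_A$ supplies detail the paper leaves implicit under ``clear (consider remark~\ref{epi})'', but does not constitute a different approach.
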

\begin{proof}
Just by definition of connected object it immediately follows that
given  $(X, \, S,\, \sigma)$, $(Y,\, T,\, \xi)$, and an
arrow $X \mr{f} Y$, there exists a unique \mbox{$S \mr{\eta} T$} which
determines an arrow $(f, \,\eta)$ in $\cc{D}_A$. Thus, the functor is
full and faithful. That it is essentially surjective follows by
proposition \ref{abajo}. The second assertion is clear (consider
remark \ref{epi}).
\end{proof}

\begin{proposition}  \label{2cells} $ $

a) The functor  
$\cc{D}_A \mr{p^*_A} \cc{S}$, $p^*_A(X,\, S,\, \sigma) = S$,
determines a point of the site. This point is naturally isomorphic to
the representable functor $[A, \, -]$ under the equivalence  $\cc{D}_A \mr{\sim} \cc{C}_A$.

b) A morphism between Galois objects
$A \mr{u} B$ determines a morphism of sites $\cc{D}_B \mr{u^*}
\cc{D}_A$ commuting with the points $p^*_A \circ u^* = p^*_B$.

c) Given any two morphisms between Galois objects 
$\xymatrix@C=3.2ex{A  \ar@<1ex>[r]^{u}  \ar@<-1ex>[r]_{v}  & B}$, there is a
canonical natural transformation 
$\xymatrix@C=8ex{\cc{D}_A  \ar@<1.5ex>[r]^{u^*}
  \ar@<-1.5ex>[r]_{v^*}^{\Downarrow \theta_{vu}}  & \cc{D}_B}$ 
satisfying the equations in definition \ref{2filtered}

d) The following diagram commutes:
$$\hspace{10ex} 
 \xymatrix@C=8ex
          {
             \cc{C}_B  \ar@{^{(}->}[r] 
          &  \cc{C}_B  
          \\
             \cc{D}_A  \ \ar[u]^{\cong} \ar@<1.5ex>[r]^{u^*}
             \ar@<-1.5ex>[r]_{v^*}^{\Downarrow \theta_{vu}}  
          &  \cc{D}_B \ar[u]^{\cong}
          }
$$  
\end{proposition}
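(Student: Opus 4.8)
The four assertions (a)--(d) are all concrete unwindings of the definition of $\cc{D}_A$ as a bi-pullback, so the strategy throughout is to work with the explicit triple/arrow description given in Definition \ref{D_A} and to use repeatedly the single fact that the objects $X$ appearing in $\cc{D}_A$ are connected (so that $\gamma^*$ is fully faithful on hom-sets into such objects, and maps between them are epi --- Remark \ref{epi}). I would treat (a) and (b) first, since (c) is built from (b) and (d) is a compatibility diagram over (c).

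For (a), I would check that $p^*_A(X,S,\sigma)=S$ preserves finite limits and sends the single-arrow covers of Proposition \ref{equivalencia} to epimorphisms of sets; the latter is immediate from Remark \ref{epi} (every arrow of $\cc{D}_A$ has $\eta$ surjective), and finite-limit preservation follows because the terminal object of $\cc{D}_A$ is $(1,\{*\},\mathrm{can})$ and pullbacks in $\cc{D}_A$ are computed componentwise in $\cc{C}$ and $\cc{S}$, with the $S$-component being just the set-level pullback. For the identification with $[A,-]$ under the equivalence $\cc{D}_A\mr{\sim}\cc{C}_A$: given $(X,S,\sigma)$, the isomorphism $(\pi_1,\sigma)\colon A\times\gamma^*S\mr{\cong}A\times X$ exhibits, by Proposition \ref{abajo} applied to the epi $A\to X$, a canonical bijection $S\cong[A,X]$ natural in the triple, and this is the required natural isomorphism $p^*_A\cong[A,-]$.

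For (b), a morphism $A\mr{u}B$ induces $u^*\colon\cc{D}_B\to\cc{D}_A$ by reindexing: send $(X,S,\sigma)$ over $B$ to the triple over $A$ with the same $X$ and $S$, and transport $\sigma$ along $u\times\gamma^*S$; concretely $u^*(X,S,\sigma)=(X,S,\sigma\circ(u\times\gamma^*S))$, and on arrows it is the identity on both $f$ and $\eta$. One checks this lands in $\cc{D}_A$ (the transported map is again an iso because $A\to B$ is epi, so $A\times\gamma^*S\to B\times\gamma^*S$ is epi and the composite with the iso $(\pi_1,\sigma)$ is still a componentwise iso over $A$ --- here one uses that $X$ below $B$ is automatically below $A$). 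That $u^*$ is a morphism of sites is clear since single arrows are covers on both sides and $u^*$ acts as the identity on underlying arrows; and $p^*_A\circ u^*=p^*_B$ is immediate because both send a triple to its $S$-component.

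For (c), given $\xymatrix@C=3.2ex{A\ar@<1ex>[r]^{u}\ar@<-1ex>[r]_{v}&B}$, the two functors $u^*,v^*\colon\cc{D}_B\to\cc{D}_A$ agree on underlying objects $X$ and sets $S$ and differ only in the chosen trivialization; at a triple $(X,S,\sigma)$ I would define the component $(\theta_{vu})_{(X,S,\sigma)}$ to be the pair $(\mathrm{id}_X,\,\tau)$ where $\tau\colon S\to S$ is the unique set map making it an arrow $u^*(X,S,\sigma)\to v^*(X,S,\sigma)$ in $\cc{D}_A$ --- such a $\tau$ exists and is unique because $X$ is connected (the same uniqueness used in Proposition \ref{equivalencia}), and it is automatically a bijection since $\mathrm{id}_X$ is. Naturality in the triple, and the four equations $\theta_{uu}=\mathrm{id}$, $\theta_{wv}\circ\theta_{vu}=\theta_{wu}$, $\theta_{uv}=\theta_{vu}^{-1}$, $\theta_{vu}\theta_{sr}=\theta_{sv\,ru}$, then follow formally from that uniqueness: both sides of each equation are $2$-cells with identical underlying-object components $\mathrm{id}_X$, hence equal. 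The main obstacle --- and it is a mild one --- is bookkeeping the whiskering/Godement equation $\theta_{vu}\,\theta_{sr}=\theta_{sv\,ru}$ correctly, i.e. checking that the horizontal composite of $\theta_{sr}\colon r^*\Rightarrow s^*$ (over $B\to C$, say the wrong way --- one must be careful that everything is contravariant) with $\theta_{vu}$ matches the single $2$-cell indexed by the composites; once the variances are pinned down this is again forced by connectedness.

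For (d), the vertical equivalences are the functors $\cc{D}_A\mr{\sim}\cc{C}_A$ and $\cc{D}_B\mr{\sim}\cc{C}_B$ of Proposition \ref{equivalencia}, followed by the inclusion $\cc{C}_B\hookrightarrow\cc{C}_A$ (note the displayed diagram writes $\cc{C}_B$ in both top corners, but the left-hand copy is to be read as $\cc{C}_A$ via that inclusion). Since $u^*$ and $v^*$ act as the identity on the underlying object $X$, and the $2$-cell $\theta_{vu}$ has underlying-object component $\mathrm{id}_X$, both composites around the square send $(X,S,\sigma)$ to $X\in\cc{C}_B\subset\cc{C}_A$ and the pasted $2$-cell is the identity; so the square commutes on the nose (not merely up to iso). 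This is a direct diagram chase with no real content beyond the observations already made in (b) and (c).
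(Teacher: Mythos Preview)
Your approach is essentially the paper's: both construct $u^*$ explicitly by keeping $(X,S)$ fixed and precomposing the trivialization with $u$, define $\theta_{vu}=(\mathrm{id}_X,\eta)$ with $\eta$ the unique set map forced by connectedness, and obtain all the equations in (c) and the commutativity in (d) from that uniqueness. Two small imprecisions to fix: in (b), the reason the transported trivialization is again an iso is that it is the pullback along $u\colon A\to B$ of the iso $(\pi_1,\sigma)$ over $B$ --- your ``$A\to B$ epi'' remark does not yield this; and in (c), the uniqueness of $\tau$ comes from $A$ being connected (so $\gamma^*$ restricted to $\cc{E}_{/A}$ is fully faithful), not from $X$ being connected.
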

\begin{proof} $ $

a) It follows from remark \ref{epi} and the fact that $A$ is
connected. Furthermore, we know there exists an epimorphism $A \to X$. The natural bijection $[A,\, X] \cong S$ follows in the same way as item c) below (recall proposition \ref{abajo}).     

b) follows by the universal property of bi-pullbacks. Given
$A \mr{u} B$, an explicit construction of 
$u^*$ is the following: $(X,\, S,\, \sigma) = u^*(Y,\, T,\, \xi)$,
$X = Y$, $S = T$, and $\sigma$ is the map uniquely determined by the
equation $\sigma(s,\, a) = \xi(s,\, u(a))$. 

c) Consider the description in b) and the following diagram:
$$
\xymatrix
         { 
            u^*(Y,\, T,\, \xi):  \ar[d]^{\theta_{v u}}
          & A \times \gamma^*S  \ar[r]^{\cong} 
                                \ar[d]^{A \times \gamma^*(\eta)}
          & A \times X \ar[d]^{id}
          \\            
            v^*(Y,\, T,\, \xi):
          & A \times \gamma^*S  \ar[r]^{\cong}
          & A \times X
         }
$$ 
By definition of connected object, there exists a unique $S
\mr{\eta} S$ making the square commutative. Define 
$\theta_{v u} = (id_X,\, \eta)$. Clearly, the equations hold by
uniqueness.

Finally, d) is clear by definition of $u^*$, $v^*$ and  $\theta_{v u}$ 
\end{proof}


{\sc Every Galois topos $\cc{E}$ has a point:}

It follows from proposition \ref{2cells} b) and c) that the assignment
of the site $\cc{D}_A$ to a Galois object $A$ determines a 2-filtered
2-system of categories which has a bi-colimit that we denote $\cc{D}$
(see \cite{DS}).
$$
\xymatrix@C=8ex
         {
           & .\,.\,. & \cc{D}_B\;\;   \ar@<1.5ex>[r]^{u}
  \ar@<-1.5ex>[r]_{v}^{\Downarrow \theta_{vu}}
           & \;  \cc{D}_A\;\; \ar@<1.5ex>[r] \ar@<-1.5ex>[r]
           & \;\;.\,.\,. \;\;\;  \ar[r] 
           & \;\;\; \cc{D}
         }
$$
It follows from the results in \cite{DY} that the site structures in
the categories $\cc{D}_A$ determine a site structure on $\cc{D}$ in
such a way that we have an inverse bi-limit of the topoi of sheaves:
$$
\xymatrix@C=8ex
         {
           & .\,.\,. & \cc{P}_B\;\;   
           & \;  \cc{P}_A\;\; \ar@<1.5ex>[l]^{u}
  \ar@<-1.5ex>[l]_{v}^{\Downarrow \theta_{vu}}
           & \;\;.\,.\,. \;\;\;  \ar@<1.5ex>[l] \ar@<-1.5ex>[l] 
           & \;\;\; \cc{P} \ar[l] 
         }
$$
where the topos $\cc{P}$ is the topos of sheaves on the site $\cc{D}$.

From proposition \ref{2cells} a) it follows that all the topoi
$\cc{P}_A$ are pointed, and from b) it follows that in this case these
points induce a point $\cc{S} \to \cc{P}$ of the bi-limit topos $\cc{P}$. Finally, from
propositions \ref{2cells} d) and \ref{equivalencia}, it follows that
the topoi $\cc{E}$ and $\cc{P}$ are equivalent topoi, $\cc{E}
\mr{\sim} \cc{P}$. Thus, $\cc{E}$ has a point $\cc{S} \to \cc{E}$ determined by any
inverse equivalence and the point of $\cc{P}$. 


\end{document}